\newtheorem{theorem}{Theorem}[section]
\newtheorem{lemma}[theorem]{Lemma}
\theoremstyle{definition}
\numberwithin{equation}{section}
\begin{document}

\title{On some conjectures concerning Stern's sequence and its twist}
\author{Michael Coons}
\address{University of Waterloo, Dept.~of Pure Math., Waterloo, ON, N2L 3G1, Canada}
\email{mcoons@math.uwaterloo.ca}
\thanks{Research supported by a Fields--Ontario Fellowship.}%

\subjclass[2010]{Primary 11B37; Secondary 11B83}%
\keywords{Stern sequence, functional equations, binary expansion}%
\date{\today}

\begin{abstract}
In a recent paper, Roland Bacher conjectured three identities concerning Stern's sequence and its twist. In this paper we prove Bacher's conjectures. Possibly of independent interest, we also give a way to compute the Stern value (or twisted Stern value) of a number based solely on its binary expansion.
\end{abstract}

\maketitle

\section{Introduction}

We define the {\em Stern sequence} (also known as {\em Stern's diatomic sequence}) $\{s(n)\}_{n\geqslant 0}$ by $s(0)=0$, $s(1)=1$, and for all $n\geqslant 1$ by $$s(2n)=s(n),\qquad s(2n+1)=s(n)+s(n+1);$$ this is sequence A002487 in Sloane's list. We denote by $S(z)$ the generating function of the Stern sequence; that is, $$S(z):=\sum_{n\geqslant 0} s(n)z^n.$$ Stern's sequence has been well studied and has many interesting properties (see e.g., \cite{Dil1, Dil2, Leh1, Lin1} for details). One of the most interesting properties is that the sequence $\{s(n+1)/s(n)\}_{n\geqslant 1}$ is an enumeration of the positive reduced rationals without repeats.

Similarly Bacher \cite{B} introduced the {\em twisted Stern sequence} $\{t(n)\}_{n\geqslant 0}$ given by the recurrences $t(0)=0$, $t(1)=1$, and for $n\geqslant 1$ by $$t(2n)=-t(n),\qquad t(2n+1)=-t(n)-t(n+1).$$ We denote by $T(z)$ the generating function of the twisted Stern sequence; that is, $$T(z):=\sum_{n\geqslant 0} t(n)z^n.$$ Towards describing the relationship between the Stern sequence and its twist, Bacher \cite{B} gave many results, and two conjectures. As the main theorems of this article, we prove these conjectures, so we will state them as theorems (note that we have modified some of the notation). 

\begin{theorem}\label{Bacherconj1} There exists an integral sequence $\{u(n)\}_{n\geqslant 0}$ such that for all $e\geqslant 0$ we have $$\sum_{n\geqslant 0} t(3\cdot 2^e+n)z^n=(-1)^eS(z)\sum_{n\geqslant 0}u(n)z^{n\cdot 2^e}.$$ 
\end{theorem}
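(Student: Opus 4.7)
My plan is to derive a functional-type recurrence for the shifted generating functions
$$A_e(z) := \sum_{n\geqslant 0} t(3\cdot 2^e + n)\, z^n$$
by exploiting the even/odd self-similarity built into the twisted Stern recurrence. First I will record the elementary observation that splitting the defining sums into even- and odd-indexed parts, combined with $s(0)=0=t(0)$, produces the functional equations
$$S(z) = (1+z+z^{-1})\, S(z^2),\qquad T(z) = -(1+z+z^{-1})\, T(z^2).$$
These will serve as the template for what I want to prove about each $A_e(z)$.

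Next I will apply the same even/odd decomposition directly to $A_{e+1}(z)$. The even part instantly becomes $-A_e(z^2)$ since $t(2m)=-t(m)$, while the odd part, after invoking $t(2k+1)=-t(k)-t(k+1)$ and reindexing, collapses to $-(z+z^{-1})A_e(z^2) + z^{-1}t(3\cdot 2^e)$. The arithmetic fact that makes the whole argument work is $t(3) = -t(1)-t(2) = -1-(-1) = 0$, which by the even recurrence gives $t(3\cdot 2^e) = (-1)^e t(3) = 0$ for every $e\geqslant 0$; the boundary term therefore vanishes and I obtain the clean homogeneous recurrence
$$A_{e+1}(z) = -(1+z+z^{-1})\, A_e(z^2) = -\frac{S(z)}{S(z^2)}\, A_e(z^2).$$

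Iterating this identity telescopes to
$$A_e(z) = (-1)^e\, \frac{S(z)}{S(z^{2^e})}\, A_0(z^{2^e}),$$
so setting $U(z):=A_0(z)/S(z)$ converts the right-hand side into the form $(-1)^e S(z)\, U(z^{2^e})$ demanded by the theorem. What then remains is to check integrality of the $u(n)$. Since $t(3)=0=s(0)$, both $A_0(z)$ and $S(z)$ lie in $z\cdot\B{Z}[[z]]$; moreover $S(z)/z$ has constant term $s(1)=1$ and is therefore a unit in $\B{Z}[[z]]$, so $U(z)=(A_0(z)/z)\cdot(S(z)/z)^{-1}\in\B{Z}[[z]]$ and the sequence $u(n)$ is indeed integral. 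The main obstacle I expect is the bookkeeping in the odd-part decomposition and the recognition that the boundary term $z^{-1}t(3\cdot 2^e)$ vanishes for every $e$; once this is identified, the remainder of the argument is a routine telescoping of formal power series.
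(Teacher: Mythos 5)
Your argument is correct, and it takes a genuinely different (and leaner) route than the paper. You split $A_{e+1}(z)=\sum_{n\geqslant 0}t(3\cdot 2^{e+1}+n)z^n$ into even and odd parts directly, and the single arithmetic fact $t(3\cdot 2^e)=(-1)^e t(3)=0$ kills the boundary term, yielding the homogeneous first-order recurrence $A_{e+1}(z)=-(1+z+z^{-1})A_e(z^2)=-\bigl(S(z)/S(z^2)\bigr)A_e(z^2)$, which telescopes to the claim; the integrality of $u(n)$ then follows exactly as you say, since $A_0$ vanishes at $z=0$ and $S(z)/z$ is a unit in $\mathbb{Z}[[z]]$. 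The paper instead works with $U(z^{2^e})$ from the explicit formula $U(z)=(T(z)+z^2-z)/(z^3S(z))$: it iterates the functional equation for $T$ to get a closed form for $T(z^{2^e})$ (Lemma \ref{T2e}), and then needs an explicit evaluation of the partial sum $\sum_{n=0}^{3\cdot 2^e}t(n)z^n$ in terms of the products $\prod_{i}(1+z^{2^i}+z^{2^{i+1}})$ (Lemma \ref{tspp}), which in turn rests on Bacher's product identity (Theorem \ref{B1.4}). Your approach bypasses all of that machinery and is self-contained apart from the $S$-functional equation; what the paper's heavier computation buys is Lemma \ref{tspp} itself, which is reused in the proof of Theorem \ref{Bacherconj2}(ii), so if you adopted your proof wholesale you would still need to establish that lemma separately for the second conjecture. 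One small point worth making explicit in a final write-up: the recurrences $t(2n)=-t(n)$ and $t(2n+1)=-t(n)-t(n+1)$ are only stipulated for $n\geqslant 1$, so you should note that every index $3\cdot 2^e+m$ appearing in your even/odd decomposition is at least $3$, which keeps all applications legitimate.
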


Note that in this theorem (as in the original conjecture), it is implicit that the sequence $\{u(n)\}_{n\geqslant 0}$ is defined by the relationship $$U(z):=\sum_{n\geqslant 0}u(n)z^n =\frac{\sum_{n\geqslant 0}t(3+n)z^n}{S(z)}.$$

\begin{theorem}\label{Bacherconj2} (i) The series $$G(z):=\frac{\sum_{n\geqslant 0}(s(2+n)-s(1+n))z^n}{S(z)}$$ satisfies $$\sum_{n\geqslant 0}(s(2^{e+1}+n)-s(2^e+n))z^n=G(z^{2^e})S(z)$$ for all $e\in\mathbb{N}$.

Similarly, (ii) the series $$H(z):=-\frac{\sum_{n\geqslant 0}(t(2+n)+t(1+n))z^n}{S(z)}$$ satisfies $$(-1)^{e+1}\sum_{n\geqslant 0}(t(2^{e+1}+n)+t(2^e+n))z^n=H(z^{2^e})S(z)$$ for all $e\in\mathbb{N}$.
\end{theorem}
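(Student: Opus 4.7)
The plan is to find a functional equation for the block generating functions
$$\mathcal{A}_e(z) := \sum_{n \geqslant 0} s(2^e+n)\, z^n \qquad \text{and} \qquad \mathcal{C}_e(z) := \sum_{n \geqslant 0} t(2^e+n)\, z^n$$
that relates level $e$ to level $e-1$, and then to form the precise linear combinations of consecutive levels appearing in the theorem in such a way that the boundary constants cancel and only the factor $(1+z+z^{-1})$ survives---matching the standard functional equation $S(z)=(1+z+z^{-1})S(z^2)$ obtained by splitting $S(z)=\sum_{n}s(n)z^n$ by the parity of $n$ and applying the Stern recurrence. Once the functional equation matches the one for $S$, the claim propagates from the base case $e=0$ (which is the definition of $G$, respectively $H$) by induction on $e$.

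For part (i), the same parity split applied to $\mathcal{A}_e(z)$ for $e\geqslant 1$ is legal because $2^e$ is even, so $2^e+n$ has the same parity as $n$; using $s(2(2^{e-1}+k))=s(2^{e-1}+k)$ and $s(2(2^{e-1}+k)+1)=s(2^{e-1}+k)+s(2^{e-1}+k+1)$, reindexing, and correcting a shifted subseries via $s(2^{e-1})=1$, one obtains
$$\mathcal{A}_e(z)=(1+z+z^{-1})\,\mathcal{A}_{e-1}(z^2)-z^{-1}\qquad(e\geqslant 1).$$
Setting $B_e(z):=\mathcal{A}_{e+1}(z)-\mathcal{A}_e(z)$, the constants $-z^{-1}$ cancel, yielding $B_e(z)=(1+z+z^{-1})B_{e-1}(z^2)$. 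Dividing by $S(z)=(1+z+z^{-1})S(z^2)$ gives $B_e(z)/S(z)=B_{e-1}(z^2)/S(z^2)$, and a downward induction to the base $B_0(z)/S(z)=G(z)$ yields $B_e(z)=G(z^{2^e})S(z)$, as claimed.

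Part (ii) is entirely parallel. The twisted recurrence together with $t(2^{e-1})=(-1)^{e-1}$ gives
$$\mathcal{C}_e(z)=-(1+z+z^{-1})\,\mathcal{C}_{e-1}(z^2)+(-1)^{e-1}z^{-1}\qquad(e\geqslant 1).$$
Here the appropriate combination is the \emph{sum} $D_e(z):=\mathcal{C}_{e+1}(z)+\mathcal{C}_e(z)$, since the constants $(-1)^e z^{-1}+(-1)^{e-1}z^{-1}$ cancel. One obtains $D_e(z)=-(1+z+z^{-1})D_{e-1}(z^2)$, and setting $E_e(z):=(-1)^{e+1}D_e(z)$ absorbs the sign so that $E_e(z)=(1+z+z^{-1})E_{e-1}(z^2)$. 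The same induction as in part (i), with base case $E_0(z)/S(z)=H(z)$, yields $E_e(z)=H(z^{2^e})S(z)$ for all $e\geqslant 0$. The only real content of the argument is the observation that the boundary constants produced by the parity split are arranged so that the \emph{specific} combinations of $\mathcal{A}_{e+1},\mathcal{A}_e$ (respectively $\mathcal{C}_{e+1},\mathcal{C}_e$) appearing in the theorem are precisely the ones that annihilate those constants, leaving a functional equation identical in shape to the one satisfied by $S$; once this is spotted, the induction is automatic.
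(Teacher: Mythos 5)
Your proof is correct, but it takes a genuinely different route from the paper's. You derive a two-scale functional equation for the block generating functions $\mathcal{A}_e(z)=\sum_{n\geqslant 0}s(2^e+n)z^n$ and $\mathcal{C}_e(z)=\sum_{n\geqslant 0}t(2^e+n)z^n$ directly from the defining recurrences (I checked the boundary constants: $s(2^{e-1})=1$ and $t(2^{e-1})=(-1)^{e-1}$ do produce exactly $-z^{-1}$ and $(-1)^{e-1}z^{-1}$, which cancel in the difference $\mathcal{A}_{e+1}-\mathcal{A}_e$ and the sum $\mathcal{C}_{e+1}+\mathcal{C}_e$ respectively), so that $B_e$ and $E_e$ satisfy the same relation $F(z)=(1+z+z^{-1})F(z^2)$ as $S$, and the claim follows by dividing and inducting down to the base case, which is the definition of $G$ and $H$. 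The paper instead computes $G(z)=\frac{1-z}{z^2}-\frac{1}{zS(z)}$ and $H(z)=\frac{1}{zS(z)}-\frac{1+z}{z^2}\cdot\frac{T(z)}{S(z)}$ explicitly, iterates the functional equations for $S$ and $T$ to express $G(z^{2^e})S(z)$ and $H(z^{2^e})S(z)$ in terms of the partial products $\prod_{i=0}^{e-1}(1+z^{2^i}+z^{2^{i+1}})$, and then identifies the resulting polynomials with blocks of Stern and twisted Stern values via Lemma \ref{pss}, Lemma \ref{tspp}, the expression \eqref{ToverS} from the proof of Theorem \ref{Bacherconj1}, and Bacher's identities such as $t(2^{e+1}+n)+t(2^e+n)=(-1)^{e+1}s(n)$. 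Your argument buys considerable economy: it needs none of those auxiliary lemmas, treats parts (i) and (ii) in a completely parallel fashion, and isolates the one structural fact that makes the theorem work (the specific combinations in the statement are precisely the ones that kill the boundary terms of the parity split); the paper's computation, by contrast, produces along the way explicit closed forms for the partial sums $\sum_{n=0}^{3\cdot 2^e}t(n)z^n$ and the products $z\prod_{i=0}^{e-1}(1+z^{2^i}+z^{2^{i+1}})$, which are of some independent interest and are reused in the proof of Theorem \ref{Bacherconj1}. One small point of rigor worth stating explicitly if you write this up: since $S(z)$ has no constant term, the divisions by $S(z)$ and the appearance of $z^{-1}$ should be read in the field of formal Laurent series (as the paper also implicitly does).
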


These theorems were originally stated as Conjectures 1.3 and 3.2 in \cite{B}.

\section{Untwisting Bacher's First Conjecture}

In this section, we will prove Theorem \ref{Bacherconj1}, but first we note the following lemma which are a direct consequence of the definitions of the Stern sequence and its twist.

\begin{lemma}\label{AT} The generating series $S(z)=\sum_{n\geqslant 0}s(n)z^n$ and $T(z)=\sum_{n\geqslant 0}t(n)z^n$ satisfy the functional equations $$S(z^2)=\left(\frac{z}{1+z+z^2}\right)S(z)$$ and $$T(z^2)=\left(T(z)-2z\right)\left(\frac{-z}{1+z+z^2}\right),$$ respectively.
\end{lemma}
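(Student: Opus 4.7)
The plan is to derive both functional equations by splitting the generating series into its even- and odd-indexed parts and then applying the defining recurrences directly. For $S(z)$, I first note that the recurrences $s(2n)=s(n)$ and $s(2n+1)=s(n)+s(n+1)$ in fact hold for every $n\geqslant 0$ (they are consistent with $s(0)=0,\ s(1)=1$), so no base case needs to be peeled off. Writing
\[
S(z)=\sum_{n\geqslant 0}s(2n)z^{2n}+\sum_{n\geqslant 0}s(2n+1)z^{2n+1}
=\sum_{n\geqslant 0}s(n)z^{2n}+z\sum_{n\geqslant 0}\bigl(s(n)+s(n+1)\bigr)z^{2n},
\]
and observing that $\sum_{n\geqslant 0}s(n+1)z^{2n}=z^{-2}S(z^2)$ since $s(0)=0$, I would collect terms to obtain $S(z)=S(z^2)(1+z+z^{-1})$, which rearranges to the claimed identity.

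For $T(z)$ I would carry out the same kind of split, but now being careful that the recurrence $t(2n+1)=-t(n)-t(n+1)$ fails at $n=0$: the actual value is $t(1)=1$ while the formula gives $-1$. So I would peel off the single term $t(1)z=z$ before summing, giving
\[
T(z)=z+\sum_{n\geqslant 1}t(2n)z^{2n}+\sum_{n\geqslant 1}t(2n+1)z^{2n+1}
=z-T(z^2)-z\sum_{n\geqslant 1}t(n)z^{2n}-z\sum_{n\geqslant 1}t(n+1)z^{2n}.
\]
The first shifted sum is $T(z^2)$ and the second is $z^{-2}T(z^2)-1$ (where the $-1$ comes from subtracting the missing $t(1)z^2$ term in the shift). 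Substituting and simplifying yields $T(z)=2z-T(z^2)(1+z+z^{-1})$, equivalent to the second stated identity after solving for $T(z^2)$. This discrepancy between $t(1)$ and the formula is precisely the source of the extra $-2z$ term in the functional equation.

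The only real subtlety, and the only place one could go wrong, is this bookkeeping at $n=0$ for the twisted recurrence, which produces the $2z$ correction. Everything else is a routine geometric-series manipulation, and both equations fall out after multiplying through by $z/(1+z+z^2)$.
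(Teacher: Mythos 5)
Your proof is correct and follows essentially the same route as the paper: split the series into even- and odd-indexed parts, apply the recurrences, and track the $n=0$ boundary term for the twisted sequence, which is exactly where the $2z$ correction arises in the paper's computation as well. (The paper only writes out the $T$ case and cites the literature for $S$, but your derivation of the $S$ equation is the standard one.)
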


We prove here only the functional equation for $T(z)$. The functional equation for the generating series of the Stern sequence is well--known; for details see, e.g., \cite{CoonsS1, Dil1}.

\begin{proof}[Proof of Lemma \ref{AT}] This is a straightforward calculation using the definition of $t(n)$. Note that \begin{align*} T(z)&= \sum_{n\geq 0}t(2n)z^{2n}+\sum_{n\geq 0}t(2n+1)z^{2n+1}\\
&=-\sum_{n\geq 0}t(n)z^{2n}+t(1)z+\sum_{n\geq 1}t(2n+1)z^{2n+1}\\
&=-T(z^2)+z-\sum_{n\geq 1}t(n)z^{2n+1}-\sum_{n\geq 1}t(n+1)z^{2n+1}\\
&=-T(z^2)+z-zT(z^2)-z^{-1}\sum_{n\geq 1}t(n+1)z^{2(n+1)}\\
&=-T(z^2)+2z-zT(z^2)-z^{-1}\sum_{n\geq 0}t(n+1)z^{2(n+1)}\\
&=-T(z^2)+2z-zT(z^2)-z^{-1}T(z^2).
\end{align*} Solving for $T(z^2)$ gives $$T(z^2)=\left(T(z)-2z\right)\left(\frac{-z}{1+z+z^2}\right),$$ which is the desired result.
\end{proof}

Since the proof of Theorem \ref{Bacherconj1} is easiest for the case $e=1$, and this case is indicative of the proof for the general case, we present it here separately.

\begin{proof}[Proof of Theorem \ref{Bacherconj1} for $e=1$] Recall that we define the sequence $\{u(n)\}_{n\geqslant 0}$ is by the relationship $$U(z):=\sum_{n\geqslant 0}u(n)z^n =\frac{\sum_{n\geqslant 0}t(3+n)z^n}{S(z)}.$$ Since $$\sum_{n\geqslant 0}t(3+n)z^n=\frac{1}{z^3}\left(T(z)+z^2-z\right),$$ we have that \begin{equation}\label{bUA}U(z)=\frac{T(z)+z^2-z}{z^3S(z)}=\frac{1}{z^3}\cdot\frac{T(z)}{S(z)}+\frac{z^2-z}{z^3}\cdot\frac{1}{S(z)}.\end{equation} Note that we are interested in a statement about the function $U(z^{2})$. We will use the functional equations for $S(z)$ and $T(z)$ to examine this quantity via \eqref{bUA}. Note that equation \eqref{bUA} gives, sending $z\mapsto z^{2}$ and using applying Lemma \ref{AT}, that $$U(z^2)=\frac{1}{z^6}\cdot\frac{T(z^2)}{S(z^2)}+\frac{z^4-z^2}{z^6}\cdot\frac{1}{S(z^2)}=\frac{1}{z^6S(z)}\left(2z-T(z)+(z^3-z)(1+z+z^2)\right).$$ Thus we have that \begin{align*} (-1)^1 S(z)U(z^2)&=\frac{-1}{z^6}\left(2z-T(z)-z-z^2-z^3+z^3+z^4+z^5\right)\\
&=\frac{1}{z^6}\left(T(z)-z+z^2-z^4-z^5\right)\\
&=\frac{1}{z^6}\sum_{n\geq 6}t(n)z^n\\
&=\sum_{n\geq 0}t(3\cdot 2+n)z^n,
\end{align*} which is exactly what we wanted to show.
\end{proof}

For the general case, complications arise in a few different places. The first is concerning $T(z^{2^e})$. We will build up the result with a sequence of lemmas to avoid a long and calculation--heavy proof of Theorem \ref{Bacherconj1}.

\begin{lemma}\label{T2e} For all $e\geq 1$ we have $$T(z^{2^e})=T(z)\prod_{i=0}^{e-1}\left(\frac{-z^{2^i}}{1+z^{2^i}+z^{2^{i+1}}}\right)-2\sum_{j=0}^{e-1}z^{2^j}\prod_{i=j}^{e-1}\left(\frac{-z^{2^i}}{1+z^{2^i}+z^{2^{i+1}}}\right).$$
\end{lemma}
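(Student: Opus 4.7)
The formula begs for a straightforward induction on $e$, with Lemma \ref{AT} as the engine. The key observation is that the nested product in the statement extends by exactly one factor each time we bump $e$ up by one, and that Lemma \ref{AT} applied to $z^{2^e}$ in place of $z$ produces precisely that extra factor.

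The plan is as follows. For the base case $e=1$, the claimed identity reads
$$T(z^2)=T(z)\cdot\frac{-z}{1+z+z^2}-2z\cdot\frac{-z}{1+z+z^2},$$
which is just the functional equation for $T(z)$ established in Lemma \ref{AT} after distributing. For the inductive step, assume the formula holds for some $e\geqslant 1$. I will write $z^{2^{e+1}}=(z^{2^e})^2$ and apply Lemma \ref{AT} with $z$ replaced by $z^{2^e}$, obtaining
$$T(z^{2^{e+1}})=\bigl(T(z^{2^e})-2z^{2^e}\bigr)\cdot\frac{-z^{2^e}}{1+z^{2^e}+z^{2^{e+1}}}.$$

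Next I substitute the inductive hypothesis for $T(z^{2^e})$ and distribute the factor $\frac{-z^{2^e}}{1+z^{2^e}+z^{2^{e+1}}}$ through both the $T(z)\prod_{i=0}^{e-1}(\cdots)$ term and the summation $2\sum_{j=0}^{e-1}z^{2^j}\prod_{i=j}^{e-1}(\cdots)$. The point is that multiplying $\prod_{i=j}^{e-1}\bigl(\tfrac{-z^{2^i}}{1+z^{2^i}+z^{2^{i+1}}}\bigr)$ by $\tfrac{-z^{2^e}}{1+z^{2^e}+z^{2^{e+1}}}$ gives $\prod_{i=j}^{e}\bigl(\tfrac{-z^{2^i}}{1+z^{2^i}+z^{2^{i+1}}}\bigr)$ for every $j\in\{0,1,\dots,e-1\}$. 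Finally, the leftover term $-2z^{2^e}\cdot\tfrac{-z^{2^e}}{1+z^{2^e}+z^{2^{e+1}}}$ is precisely the $j=e$ summand needed to extend the sum from $j\leqslant e-1$ to $j\leqslant e$. Collecting everything yields the formula with $e$ replaced by $e+1$, completing the induction.

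There is no real obstacle here beyond careful bookkeeping of indices; the combinatorics of how the products telescope when $z\mapsto z^{2^e}$ interacts with $z\mapsto z^2$ in Lemma \ref{AT} is the only place where one could make a transcription slip. A sanity check at $e=2$, expanding $T(z^4)$ directly from two applications of Lemma \ref{AT}, confirms that the formula produces the correct product and both correction terms $-2z$ and $-2z^2$ with their proper coefficients.
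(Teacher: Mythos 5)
Your proof is correct and follows essentially the same route as the paper: induction on $e$ with the functional equation of Lemma \ref{AT} supplying the extra factor $\frac{-z^{2^e}}{1+z^{2^e}+z^{2^{e+1}}}$ and the new $j=e$ summand. The only (cosmetic) difference is that you apply the functional equation at the outer level, to $T\bigl((z^{2^e})^2\bigr)$, whereas the paper substitutes $z\mapsto z^2$ into the case $e-1$ and then expands the resulting $T(z^2)$; both bookkeeping schemes telescope identically.
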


\begin{proof} We give a proof by induction. Note that for $e=1$, the right--hand side of the desired equality is $$T(z)\left(\frac{-z}{1+z+z^{2}}\right)-2z\left(\frac{-z}{1+z+z^{2}}\right)=\left(T(z)-2z\right)\left(\frac{-z}{1+z+z^{2}}\right)=T(z^2)$$ where the last equality follows from Lemma \ref{AT}.

Now suppose the identity holds for $e-1$. Then, again using Lemma \ref{AT}, we have 
\begin{align*} T(z^{2^e}) = T((z^2)^{2^{e-1}})&=T(z^2)\prod_{i=0}^{e-2}\left(\frac{-z^{2^{i+1}}}{1+z^{2^{i+1}}+z^{2^{i+2}}}\right)-2\sum_{j=0}^{e-2}z^{2^{j+1}}\prod_{i=j}^{e-2}\left(\frac{-z^{2^{i+1}}}{1+z^{2^{i+1}}+z^{2^{i+2}}}\right)\\
&=\left(T(z)-2z\right)\left(\frac{-z}{1+z+z^{2}}\right)\prod_{i=1}^{e-1}\left(\frac{-z^{2^{i}}}{1+z^{2^{i}}+z^{2^{i+1}}}\right)\\
&\qquad\qquad-2\sum_{j=1}^{e-1}z^{2^{j}}\prod_{i=j}^{e-1}\left(\frac{-z^{2^{i}}}{1+z^{2^{i}}+z^{2^{i+1}}}\right)\\
&=\left(T(z)-2z\right)\prod_{i=0}^{e-1}\left(\frac{-z^{2^{i}}}{1+z^{2^{i}}+z^{2^{i+1}}}\right)-2\sum_{j=1}^{e-1}z^{2^{j}}\prod_{i=j}^{e-1}\left(\frac{-z^{2^{i}}}{1+z^{2^{i}}+z^{2^{i+1}}}\right)\\
&=T(z)\prod_{i=0}^{e-1}\left(\frac{-z^{2^{i}}}{1+z^{2^{i}}+z^{2^{i+1}}}\right)-2\sum_{j=0}^{e-1}z^{2^{j}}\prod_{i=j}^{e-1}\left(\frac{-z^{2^{i}}}{1+z^{2^{i}}+z^{2^{i+1}}}\right).
\end{align*} Hence, by induction, the identity is true for all $e\geq 1$.
\end{proof}

We will need the following result for our next lemma.

\begin{theorem}[Bacher \cite{B}]\label{B1.4} For all $e\geqslant 1$, we have $$\prod_{i=0}^{e-1}\left(1+z^{2^i}+z^{2^{i+1}}\right)=\frac{(-1)^e}{z(1+z^{2^e})}\sum_{n=0}^{3\cdot 2^e}t(3\cdot 2^e+n)z^n.$$
\end{theorem}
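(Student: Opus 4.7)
The plan is to prove Bacher's identity by establishing a clean functional recursion for the polynomial
$$f_e(z) := \sum_{n=0}^{3\cdot 2^e} t(3\cdot 2^e + n)\,z^n,$$
and then iterating it. Specifically, I expect to show that for every $e\geqslant 1$,
$$f_e(z) = -\frac{1+z+z^2}{z}\, f_{e-1}(z^2),$$
together with the base computation $f_0(z)=z+z^2$. Once both are in hand, Theorem~\ref{B1.4} is a short telescoping.

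To derive the recursion I would split the sum defining $f_e(z)$ into its even-indexed ($n=2m$, $0\leqslant m\leqslant 3\cdot 2^{e-1}$) and odd-indexed ($n=2m+1$, $0\leqslant m\leqslant 3\cdot 2^{e-1}-1$) pieces. Since $e\geqslant 1$, the integer $3\cdot 2^e$ is even, so $t(3\cdot 2^e + 2m) = -t(3\cdot 2^{e-1}+m)$ and the even part collapses to $-f_{e-1}(z^2)$. For the odd part, the identity $t(2k+1)=-t(k)-t(k+1)$ with $k=3\cdot 2^{e-1}+m$ produces two sums, the second of which must be re-indexed $m\mapsto m+1$ and its powers of $z$ shifted; I would expect these to contribute $-zf_{e-1}(z^2)$ and $-z^{-1}f_{e-1}(z^2)$ respectively. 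Adding the three pieces and combining gives the claimed recursion. The essential bookkeeping point is that the re-indexing produces boundary terms proportional to $t(3\cdot 2^{e-1})$ and $t(3\cdot 2^e)$; both vanish because $t(3)=0$ and $t(2n)=-t(n)$ force $t(3\cdot 2^j)=0$ for every $j\geqslant 0$. This vanishing is exactly what makes the recursion first-order.

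Iterating the recursion and using $1+2+4+\cdots+2^{e-1}=2^e-1$ yields
$$f_e(z) = (-1)^e\,\frac{\prod_{i=0}^{e-1}(1+z^{2^i}+z^{2^{i+1}})}{z^{2^e-1}}\,f_0(z^{2^e}).$$
A direct computation from $t(3)=0$, $t(4)=1$, $t(5)=1$, $t(6)=0$ gives $f_0(z)=z(1+z)$, so $f_0(z^{2^e}) = z^{2^e}(1+z^{2^e})$; substituting and cancelling $z^{2^e}/z^{2^e-1}=z$ delivers
$$f_e(z) = (-1)^e\, z\,(1+z^{2^e})\prod_{i=0}^{e-1}(1+z^{2^i}+z^{2^{i+1}}),$$
which rearranges to the stated formula. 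The only real obstacle is computational rather than conceptual: keeping the index shifts, the signs, and the two boundary contributions straight in the derivation of the recursion. Once the recursion and the (tiny) base case are verified, no further use of Lemma~\ref{AT} or Lemma~\ref{T2e} is needed.
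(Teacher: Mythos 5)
Your proof is correct, but note that the paper does not actually prove this statement: it is quoted verbatim from Bacher's preprint (Theorem 1.4 there) and used as a black box, so there is no internal argument to compare against. Your self-contained derivation checks out in every detail. The splitting of $f_e(z)=\sum_{n=0}^{3\cdot 2^e}t(3\cdot 2^e+n)z^n$ into even and odd parts, together with $t(2k)=-t(k)$ and $t(2k+1)=-t(k)-t(k+1)$ applied at $k=3\cdot 2^{e-1}+m$, does yield the three contributions $-f_{e-1}(z^2)$, $-zf_{e-1}(z^2)$, and $-z^{-1}f_{e-1}(z^2)$, and the boundary terms you flag are indeed the crux: the truncated sums differ from $f_{e-1}(z^2)$ only by the terms $t(3\cdot 2^e)z^{3\cdot 2^e}$ and $t(3\cdot 2^{e-1})z^0$, both of which vanish since $t(3)=0$ and $t(2n)=-t(n)$ force $t(3\cdot 2^j)=0$ for all $j\geqslant 0$. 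The base case $f_0(z)=t(3)+t(4)z+t(5)z^2+t(6)z^3=z+z^2$ is right ($t(3)=0$, $t(4)=t(5)=1$, $t(6)=0$), the telescoped denominator $z^{1+2+\cdots+2^{e-1}}=z^{2^e-1}$ is right, and the final rearrangement uses $(-1)^{-e}=(-1)^e$ correctly. Your approach has the merit of making the paper more self-contained, and structurally it parallels the proof of Lemma~\ref{AT} and Lemma~\ref{T2e} in the paper (iterate a first-order functional relation coming from the defining recurrence), just applied to the shifted polynomial $f_e$ rather than to $T$ itself.
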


The following lemma is similar to the comment made in Remark 1.5 of \cite{B}.

\begin{lemma} For all $e\geq 1$, we have that $$\sum_{n=0}^{3\cdot 2^e}t(n)z^n=z-z^2+\sum_{k=0}^{e-1}(-1)^kz^{3\cdot 2^k+1}(z^{2^k}+1)\prod_{i=0}^{k-1}(1+z^{2^i}+z^{2^{i+1}}).$$
\end{lemma}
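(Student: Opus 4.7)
The plan is to apply Theorem \ref{B1.4} to each summand on the right-hand side and watch the expression collapse to a partial sum of $t(m)z^m$. For $k\geq 1$, Bacher's identity (with $e$ replaced by $k$) gives
\[
\prod_{i=0}^{k-1}(1+z^{2^i}+z^{2^{i+1}}) = \frac{(-1)^k}{z(1+z^{2^k})}\sum_{n=0}^{3\cdot 2^k} t(3\cdot 2^k + n)\,z^n,
\]
so multiplying by $(-1)^k z^{3\cdot 2^k + 1}(z^{2^k}+1)$ is exactly engineered to cancel everything in the prefactor: $(-1)^{2k}=1$, $(z^{2^k}+1)/(1+z^{2^k})=1$, and $z^{3\cdot 2^k+1}/z = z^{3\cdot 2^k}$. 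The $k$-th summand therefore collapses to
\[
z^{3\cdot 2^k}\sum_{n=0}^{3\cdot 2^k} t(3\cdot 2^k+n)\,z^n \;=\; \sum_{m=3\cdot 2^k}^{3\cdot 2^{k+1}} t(m)\,z^m.
\]
The $k=0$ case is outside the range of Theorem \ref{B1.4} and I would check it by hand: with an empty product, the summand is $z^4+z^5$, and indeed $\sum_{m=3}^6 t(m)z^m = z^4 + z^5$ since $t(3)=t(6)=0$ and $t(4)=t(5)=1$.

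Next I would add these partial sums over $k=0,\ldots,e-1$. The intervals $[3\cdot 2^k,\,3\cdot 2^{k+1}]$ cover $[3,\,3\cdot 2^e]$, but they share endpoints at $m=3\cdot 2^k$ for $k=1,\dots,e-1$. The crucial observation, and really the only step that is not pure bookkeeping, is that $t(3\cdot 2^k)=0$ for every $k\geq 0$: iterating the recursion $t(2n)=-t(n)$ reduces this to the base case $t(3) = -t(1) - t(2) = -1+1 = 0$. Thus every doubly counted endpoint contributes $0$, and the entire sum telescopes cleanly to $\sum_{m=3}^{3\cdot 2^e} t(m)z^m$.

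Combining with $z - z^2 = t(1)z + t(2)z^2$ and $t(0)=0$ then gives the full left-hand side $\sum_{n=0}^{3\cdot 2^e} t(n)z^n$, completing the argument. The main potential obstacle is really the handling of the overlapping intervals; once one recognises the vanishing $t(3\cdot 2^k)=0$, the rest is a direct application of Theorem \ref{B1.4} together with elementary algebra on the pre-factors.
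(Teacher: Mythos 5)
Your proof is correct and is essentially the paper's own argument read in the opposite direction: the paper splits $\sum_{n=0}^{3\cdot 2^e}t(n)z^n$ into the blocks $\sum_{m=3\cdot 2^k}^{3\cdot 2^{k+1}}t(m)z^m$ and then invokes Theorem \ref{B1.4}, exactly as you do. Your explicit check that $t(3\cdot 2^k)=0$ handles the overlapping endpoints (a point the paper passes over silently), and your hand verification of the $k=0$ term is a sensible extra precaution.
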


\begin{proof} If $e\geq 1$, then we have \begin{align*} \sum_{n=0}^{3\cdot 2^e}t(n)z^n &= z-z^2+\sum_{k=0}^{e-1}\sum_{n=3\cdot 2^k}^{3\cdot 2^{k+1}}t(n)z^n\\
&= z-z^2+\sum_{k=0}^{e-1}\sum_{n=0}^{3\cdot 2^{k}}t(3\cdot 2^{k}+n)z^{n+3\cdot 2^{k}}\\
&= z-z^2+\sum_{k=0}^{e-1}z^{3\cdot 2^k}\sum_{n=0}^{3\cdot 2^{k}}t(3\cdot 2^{k}+n)z^{n}.
\end{align*} Applying Theorem \ref{B1.4}, we have that $$\sum_{n=0}^{3\cdot 2^e}t(n)z^n=z-z^2+\sum_{k=0}^{e-1}z^{3\cdot 2^k}(-1)^kz(z^{2^k}+1)\prod_{i=0}^{k-1}(1+z^{2^i}+z^{2^{i+1}}),$$ which after some trivial term arrangement gives the result.
\end{proof}

\begin{lemma}\label{tspp} For all $e\geq 1$, we have $$\sum_{n=0}^{3\cdot 2^e} t(n)z^n=2z\sum_{j=0}^{e-1}(-1)^j\prod_{i=0}^{j-1}(1+z^{2^i}+z^{2^{i+1}})-(-1)^ez(z^{2^e}-1)\prod_{i=0}^{e-1} (1+z^{2^i}+z^{2^{i+1}}).$$
\end{lemma}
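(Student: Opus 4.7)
The plan is to prove this by induction on $e$, driven by Theorem \ref{B1.4} rather than by the preceding (unnumbered) lemma, so that the argument is self-contained once Theorem \ref{B1.4} is in hand.

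Set $P_k(z):=\prod_{i=0}^{k-1}(1+z^{2^i}+z^{2^{i+1}})$, with $P_0=1$. For the base case $e=1$ one computes $t(0),\ldots,t(6)$ directly and finds $\sum_{n=0}^{6}t(n)z^n=z-z^2+z^4+z^5$, which matches the right-hand side $2z+z(z^2-1)(1+z+z^2)=z-z^2+z^4+z^5$.

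For the inductive step, suppose the identity holds at level $e$. Theorem \ref{B1.4} may be rewritten as
$$\sum_{n=0}^{3\cdot 2^e}t(3\cdot 2^e+n)z^n=(-1)^e z(1+z^{2^e})P_e(z).$$
Since $t(3)=0$ and $t(2n)=-t(n)$, iterating gives $t(3\cdot 2^e)=0$, so the additional block of terms contributed in going from $e$ to $e+1$ is
$$\sum_{n=3\cdot 2^e+1}^{3\cdot 2^{e+1}}t(n)z^n=z^{3\cdot 2^e}\sum_{n=1}^{3\cdot 2^e}t(3\cdot 2^e+n)z^n=(-1)^e z^{3\cdot 2^e+1}(1+z^{2^e})P_e(z).$$
Adding this to the inductive hypothesis and equating with the desired formula at level $e+1$, the claim reduces, after cancelling the common factor $(-1)^e z$ and using $P_{e+1}=(1+z^{2^e}+z^{2^{e+1}})P_e$, to the polynomial identity
$$-(z^{2^e}-1)+z^{3\cdot 2^e}(1+z^{2^e})=2+(z^{2^{e+1}}-1)(1+z^{2^e}+z^{2^{e+1}}),$$
both sides of which expand to $1-z^{2^e}+z^{3\cdot 2^e}+z^{2^{e+2}}$.

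The only real obstacle is keeping track of signs and of the four exponents $2^e$, $3\cdot 2^e$, $2^{e+1}$, $2^{e+2}$ that appear; once $P_{e+1}$ is written through its product definition, the reduced polynomial identity is immediate by direct expansion, and the induction closes.
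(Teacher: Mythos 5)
Your proof is correct; I verified the base case ($\sum_{n=0}^{6}t(n)z^n=z-z^2+z^4+z^5$), the vanishing of $t(3\cdot 2^e)$, and the reduced polynomial identity, whose two sides both expand to $1-z^{2^e}+z^{3\cdot 2^e}+z^{2^{2+e}}$. The underlying ingredients are the same as in the paper --- Theorem \ref{B1.4} to evaluate the block sums $\sum_{n}t(3\cdot 2^k+n)z^n$, followed by an induction whose heart is a short polynomial identity --- but your organization is genuinely leaner: the paper first telescopes over all blocks $k=0,\dots,e-1$ to obtain an intermediate closed form (the unnumbered lemma preceding Lemma \ref{tspp}), and then runs a second induction on the derived identity \eqref{lrhs} to reshape that closed form into the stated one. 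You instead induct directly on the statement of Lemma \ref{tspp}, invoking Theorem \ref{B1.4} only for the single new block $3\cdot 2^e< n\leq 3\cdot 2^{e+1}$ at each step, which collapses the paper's two-stage argument into one induction and dispenses with the intermediate lemma entirely. The trade-off is minor: the paper's intermediate formula is reused in spirit elsewhere (it is the analogue of Bacher's Remark 1.5), whereas your version is self-contained and keeps all the bookkeeping in a single reduced identity. One small presentational point: you should note explicitly that the common factor $(-1)^e z P_e(z)$ you cancel is a nonzero polynomial, and that Theorem \ref{B1.4} is being applied with its parameter equal to your current $e\geq 1$, so the inductive step is legitimate for all $e\geq 1$.
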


\begin{proof} This lemma is again proved by induction, using the result of the previous lemma. Note that in view of the previous lemma, by subtracting the first term on the right--hand side of the desired equality, it is enough to show that for all $e\geq 1$, we have \begin{multline}\label{lrhs}z-z^2+\sum_{k=0}^{e-1}(-1)^k\left(z^{4\cdot 2^k}+z^{3\cdot 2^k}-2\right)z\prod_{i=0}^{k-1}(1+z^{2^i}+z^{2^{i+1}})\\ =-(-1)^ez(z^{2^e}-1)\prod_{i=0}^{e-1} (1+z^{2^i}+z^{2^{i+1}}).\end{multline}

If $e=1$, then the left--hand side of \eqref{lrhs} is $$z-z^2+(z^4+z^3-s)z=-z-z^2+z^4+z^5,$$ and the right--hand side of \eqref{lrhs} is $$-(-1)z(z^2-1)(1+z+z^2)=-z-z^2+z^4+z^5,$$ so that \eqref{lrhs} holds for $e=1$.

Now suppose that \eqref{lrhs} holds for $e-1$. Then \begin{align*} z-z^2+\sum_{k=0}^{e-1}(-1)^k&\left(z^{4\cdot 2^k}+z^{3\cdot 2^k}-2\right)z\prod_{i=0}^{k-1}(1+z^{2^i}+z^{2^{i+1}})\\
&= (-1)^{e-1}\left(z^{4\cdot 2^{e-1}}+z^{3\cdot 2^{e-1}}-2\right)z\prod_{i=0}^{e-2}(1+z^{2^i}+z^{2^{i+1}})\\
&\qquad\qquad+z-z^2+\sum_{k=0}^{e-2}(-1)^k\left(z^{4\cdot 2^k}+z^{3\cdot 2^k}-2\right)z\prod_{i=0}^{k-1}(1+z^{2^i}+z^{2^{i+1}})\\
&= (-1)^{e-1}\left(z^{4\cdot 2^{e-1}}+z^{3\cdot 2^{e-1}}-2\right)z\prod_{i=0}^{e-2}(1+z^{2^i}+z^{2^{i+1}})\\
&\qquad\qquad-(-1)^{e-1}z(z^{2^{e-1}}-1)\prod_{i=0}^{e-2} (1+z^{2^i}+z^{2^{i+1}})\end{align*} Factoring out the product we thus have that 
\begin{align*}
z-z^2+\sum_{k=0}^{e-1}(-1)^k&\left(z^{4\cdot 2^k}+z^{3\cdot 2^k}-2\right)z\prod_{i=0}^{k-1}(1+z^{2^i}+z^{2^{i+1}})\\
&=(-1)^e\prod_{i=0}^{e-2} (1+z^{2^i}+z^{2^{i+1}})\cdot \left(-\left(z^{4\cdot 2^{e-1}}+z^{3\cdot 2^{e-1}}-2\right)z+z(z^{2^{e-1}}-1)\right)\\
&=-(-1)^ez\prod_{i=0}^{e-2} (1+z^{2^i}+z^{2^{i+1}})\cdot \left(z^{4\cdot 2^{e-1}}+z^{3\cdot 2^{e-1}}-z^{2\cdot 2^{e-1}}-1\right)\\
&=-(-1)^ez\prod_{i=0}^{e-2} (1+z^{2^i}+z^{2^{i+1}})\cdot (z^{2^e}-1)(1+z^{2^{e-1}}+z^{2^{e}})\\
&=-(-1)^ez(z^{2^e}-1)\prod_{i=0}^{e-1} (1+z^{2^i}+z^{2^{i+1}}),
\end{align*} so that by induction, \eqref{lrhs} holds for all $e\geq 1$.
\end{proof}

With these lemmas in place we are in position to prove Theorem \ref{Bacherconj1}.

\begin{proof}[Proof of Theorem \ref{Bacherconj1}] We start by restating \eqref{bUA}; that is $$U(z)=\frac{T(z)+z^2-z}{z^3S(z)}=\frac{1}{z^3}\cdot\frac{T(z)}{S(z)}+\frac{z^2-z}{z^3}\cdot\frac{1}{S(z)}.$$ Sending $z\mapsto z^{2^e},$ we have that $$U(z^{2^e})=\frac{1}{z^{3\cdot 2^e}}\cdot\frac{T(z^{2^e})}{S(z^{2^e})}+\frac{z^{2^{e+1}}-z^{2^e}}{z^{3\cdot 2^e}}\cdot\frac{1}{S(z^{2^e})}=\frac{1}{z^{3\cdot 2^e}}\cdot\frac{T(z^{2^e})}{S(z^{2^e})}+\frac{z^{2^{e+1}}-z^{2^e}}{z^{3\cdot 2^e}z^{2^e-1}S(z)}\cdot\prod_{i=0}^{e-1} (1+z^{2^i}+z^{2^{i+1}}),$$ where we have used the functional equation for $S(z)$ to give the last equality. Using Lemma \ref{T2e} and the functional equation for $S(z)$, we have that \begin{align}\nonumber\frac{T(z^{2^e})}{S(z^{2^e})}&=\frac{T(z)\prod_{i=0}^{e-1}\left(\frac{-z^{2^i}}{1+z^{2^i}+z^{2^{i+1}}}\right)-2\sum_{j=0}^{e-1}z^{2^j}\prod_{i=j}^{e-1}\left(\frac{-z^{2^i}}{1+z^{2^i}+z^{2^{i+1}}}\right)}{S(z)}\cdot\prod_{i=0}^{e-1} \left(\frac{1+z^{2^i}+z^{2^{i+1}}}{z^{2^i}}\right)\\
\label{ToverS}&=(-1)^e\frac{T(z)}{S(z)}-(-1)^e\frac{2z}{S(z)}\sum_{j=0}^{e-1}(-1)^{j}\prod_{i=0}^{j-1}\left({1+z^{2^i}+z^{2^{i+1}}}\right).\end{align} Applying this to the expression for $U(z^{2^e})$ we have, multiplying by $(-1)^eS(z)$, that \begin{multline*}(-1)^eS(z)U(z^{2^e})=\frac{1}{z^{3\cdot 2^e}}\left(T(z)-2z\sum_{j=0}^{e-1}(-1)^{j}\prod_{i=0}^{j-1}\left({1+z^{2^i}+z^{2^{i+1}}}\right)\right.\\ \left.+(-1)^ez(z^{2^e}-1)\prod_{i=0}^{e-1}\left({1+z^{2^i}+z^{2^{i+1}}}\right)\right).\end{multline*} Now by Lemma \ref{tspp}, this reduces to $$(-1)^eS(z)U(z^{2^e})=\frac{1}{z^{3\cdot 2^e}}\left(T(z)-\sum_{n=0}^{3\cdot 2^e} t(n)z^n\right)=\sum_{n\geq 0}t(3\cdot 2^3+n)z^n,$$ which proves the theorem.
\end{proof}




\section{Untwisting Bacher's Second Conjecture}

In this section, we will prove Theorem \ref{Bacherconj2}. For ease of reading we have separated the proofs of the two parts of Theorem \ref{Bacherconj2}.

To prove Theorem \ref{Bacherconj2}(i) we will need the following lemma.

\begin{lemma}\label{pss} For all $k\geq 0$ we have that $$z\prod_{i=0}^{k-1}\left(1+z^{2^i}+z^{2^{i+1}}\right)=\sum_{n=1}^{2^k}s(n)z^n+\sum_{n=1}^{2^k-1}s(2^k-n)z^{n+2^k}.$$
\end{lemma}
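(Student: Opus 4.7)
The plan is to proceed by induction on $k$, in the style of the preceding lemmas in this section. The base case $k=0$ is immediate: the empty product gives $z$ on the left, and the right-hand side collapses to $s(1)z=z$ since the second sum is empty.

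For the inductive step I would assume the identity at level $k$ and multiply both sides by $(1+z^{2^k}+z^{2^{k+1}})$, which turns the left-hand side into the one for level $k+1$. Writing $R_k(z)$ for the right-hand side at level $k$, the task becomes to verify
$$R_k(z)+z^{2^k}R_k(z)+z^{2^{k+1}}R_k(z)=\sum_{n=1}^{2^{k+1}}s(n)z^n+\sum_{n=1}^{2^{k+1}-1}s(2^{k+1}-n)z^{n+2^{k+1}},$$
which I would do coefficient by coefficient. Splitting the index $j$ by the critical points $2^k$, $2^{k+1}$, $3\cdot 2^k$, $2^{k+2}$, the two extreme intervals $1\leq j\leq 2^k$ and $3\cdot 2^k<j<2^{k+2}$ match by inspection (one summand contributes to each), while the endpoint values $j=2^{k+1}$ and $j=3\cdot 2^k$ both reduce to the trivial $s(2^k)=1$. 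The two interior intervals $2^k<j<2^{k+1}$ and $2^{k+1}<j<3\cdot 2^k$ both collapse, after the substitution $i=j-2^k$ (respectively $i=2^{k+1}-j+2^k$), to the single classical addition identity
$$s(2^k+i)=s(i)+s(2^k-i),\qquad 0\leq i\leq 2^k.$$

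The main obstacle is therefore this Stern addition identity, but it is well known and admits a short auxiliary induction on $k$ from the defining recurrences: the even case $i=2j$ reduces by $s(2n)=s(n)$ directly to the hypothesis at level $k-1$, and the odd case $i=2j+1$ splits via $s(2n+1)=s(n)+s(n+1)$ into two separate applications of the hypothesis. Once this identity is in hand, the coefficient bookkeeping outlined above closes the main induction, and the lemma follows.
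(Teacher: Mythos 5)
Your proof is correct and follows essentially the same route as the paper: induction on $k$, multiplying by the next factor $1+z^{2^k}+z^{2^{k+1}}$ and reducing everything to the Stern addition identity $s(2^k+i)=s(i)+s(2^k-i)$; the only difference is that you verify the resulting identity coefficient by coefficient while the paper regroups the six sums as series, and you supply your own induction for the addition identity where the paper simply cites Bacher's Theorem 1.2(i).
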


\begin{proof} Again, we prove by induction. Note that for $k=0$, the product and the left--most sum are both empty, thus they are equal to $1$ and $0$, respectively. Since $$z=s(1)z=\sum_{n=1}^{2^0}s(n)z^n$$ the theorem is true for $k=0$. To use some nonempty terms, we consider the case $k=1$. Then we have $$z\prod_{i=0}^{1-1}\left(1+z^{2^i}+z^{2^{i+1}}\right)=z+z^2+z^3=\sum_{n=1}^{2^1}s(n)z^n+\sum_{n=1}^{2^1-1}s(2^1-n)z^{n+2^1},$$ so the theorem holds for $k=1$.

Now suppose the theorem holds for $k-1$. Then \begin{align*} z\prod_{i=0}^{k-1}\left(1+z^{2^i}+z^{2^{i+1}}\right)&=\left(1+z^{2^{k-1}}+z^{2^{k}}\right)\cdot z\prod_{i=0}^{k-2}\left(1+z^{2^i}+z^{2^{i+1}}\right)\\
&=\left(1+z^{2^{k-1}}+z^{2^{k}}\right)\left(\sum_{n=1}^{2^{k-1}}s(n)z^n+\sum_{n=1}^{2^{k-1}-1}s(2^{k-1}-n)z^{n+2^{k-1}}\right)\\
&=\left(\sum_{n=1}^{2^{k-1}}s(n)z^n+\sum_{n=1}^{2^{k-1}-1}s(2^{k-1}-n)z^{n+2^{k-1}}+\sum_{n=1}^{2^{k-1}}s(n)z^{n+2^{k-1}}\right)\\
&\qquad +\left(\sum_{n=1}^{2^{k-1}-1}s(2^{k-1}-n)z^{n+2^{k}}+\sum_{n=1}^{2^{k-1}}s(n)z^{n+2^{k}}\right.\\
&\qquad\qquad\left.+\sum_{n=1}^{2^{k-1}-1}s(2^{k-1}-n)z^{n+3\cdot 2^{k-1}}\right)\\
&=\Sigma_1+\Sigma_2,
\end{align*} where $\Sigma_1$ and $\Sigma_2$ represent the triplets of sums from the previous line (we have grouped the last sums in triplets since we will deal with them that way. Note that we have \begin{multline*}\Sigma_1=\sum_{n=1}^{2^{k-1}}s(n)z^n+s(2^{k-1})z^{2^k}+\sum_{n=1}^{2^{k-1}-1}\left(s(n)+s(2^{k-1}-n)\right)z^{n+2^{k-1}}\\ =\sum_{n=1}^{2^{k-1}}s(n)z^n+s(2^{k})z^{2^k}+\sum_{n=1}^{2^{k-1}-1}s(2^{k-1}+n)z^{n+2^{k-1}}=\sum_{n=1}^{2^{k}}s(n)z^n,\end{multline*} where we have used the fact that $s(2n)=s(n)$ and for $n\in[0,2^j]$ the identity $s(2^j+n)=s(2^j-n)+s(n)$ holds (see, e.g., \cite[Theorem 1.2(i)]{B} for details). Similarly, since $2^{k-1}-n=2^k-(n+2^{k-1})$ and $$s(2^{k-1}-n)+s(n)=s(2^{k-1}+n)=s(2^k+n)-s(n)=s(2^k-n)$$  (see Proposition 3.1(i) and Theorem 1.2(i) of \cite{B}), we have that \begin{align*}\Sigma_2&=\sum_{n=1}^{2^{k-1}-1}\left(s(2^{k-1}-n)+s(n)\right)z^{n+2^k}+s(2^{k-1})z^{3\cdot 2^{k-1}}+\sum_{n=1}^{2^{k-1}-1}s(2^{k-1}-n)z^{n+2^{k-1}+2^k}\\ &= \sum_{n=1}^{2^{k-1}-1}s(2^{k}-n)z^{n+2^k}+s(2^{k-1})z^{3\cdot 2^{k-1}}+\sum_{n=2^{k-1}-1}^{2^{k}-1}s(2^{k}-n)z^{n+2^k}\\ &=\sum_{n=1}^{2^k-1}s(2^k-n)z^{n+2^k}.\end{align*} Thus $$\Sigma_1+\Sigma_2=\sum_{n=1}^{2^{k}}s(n)z^n+\sum_{n=1}^{2^k-1}s(2^k-n)z^{n+2^k},$$ and by induction the lemma is proved.
\end{proof}

\begin{proof}[Proof of Theorem \ref{Bacherconj2}(i)] We denote as before the generating series of the Stern sequence by $S(z)$. Splitting up the sum in the definition of $G(z)$ we see that $$G(z)=\frac{1}{z^{2}}(1-z)-\frac{1}{z S(z)},$$ so that using the functional equation for $S(z)$ we have \begin{align*}G(z^{2^e}) = \frac{1}{z^{2^{e+1}}}(1-z^{2^e})-\frac{1}{z^{2^e} S(z^{2^e})}= \frac{1}{z^{2^{e+1}}}(1-z^{2^e})-\frac{1}{z^{2^{e}}}\cdot\frac{\prod_{i=0}^{e-1}(1+z^{2^i}+z^{2^{i+1}})}{z^{2^{e}-1}S(z)}.
\end{align*} This gives \begin{equation}\label{32rhsi} G(z^{2^e})S(z)=\frac{1}{z^{2^{e+1}}}\left((1-z^{2^e})S(z)-z\prod_{i=0}^{e-1}(1+z^{2^i}+z^{2^{i+1}})\right).\end{equation}  We use the previous lemma to deal with the right--hand side of \eqref{32rhsi}; that is, the previous lemma gives that 
\begin{align*}
(1-z^{2^e})S(z)-z\prod_{i=0}^{e-1}(1+z^{2^i}+z^{2^{i+1}})&= \sum_{n\geq 1}s(n)z^n-\sum_{n=1}^{2^e}s(n)z^n\\
&\qquad\qquad-\sum_{n\geq 1}s(n)z^{n+2^e}-\sum_{n=1}^{2^e-1}s(2^e-n)z^{n+2^e}\\
&=\sum_{n\geq 2^e+1} s(n)z^n-\sum_{n\geq 2^e}s(n)z^{n+2^e}\\
&\qquad\qquad-\sum_{n=1}^{2^e-1}(s(n)+s(2^e-n))z^{n+2^e}\\
&=\sum_{n\geq 1} s(2^e+n)z^{n+2^e}-\sum_{n\geq 2^e}s(n)z^{n+2^e}\\
&\qquad\qquad-\sum_{n=1}^{2^e-1}s(2^e+n)z^{n+2^e}\\
&=\sum_{n\geq 0} s(2^{e+1}+n)z^{n+2^{e+1}}-\sum_{n\geq 0}s(2^e+n)z^{n+2^{e+1}}.
\end{align*} Dividing the last line by $z^{2^{e+1}}$ gives the desired result. This proves the theorem.
\end{proof}


The proof of the second part of the theorem follows similarly. We will use the following lemma.

\begin{lemma}[Bacher \cite{B}] For $n$ satisfying $1\leq n\leq 2^e$ we have that 
\begin{enumerate}
\item[(i)] $t(2^{e+1}+n)+t(2^{e}+n)=(-1)^{e+1}s(n),$
\item[(ii)] $t(2^{e}+n)=(-1)^e(s(2^e-n)-s(n))$, 
\item[(iii)] $t(2^{e+1}+n)=(-1)^{e+1}s(2^{e}-n)$.
\end{enumerate}
\end{lemma}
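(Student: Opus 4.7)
The plan is to reduce the three identities to a single induction. First observe that adding (ii) and (iii) yields (i) with no extra work:
\[
t(2^{e+1}+n) + t(2^e+n) = (-1)^{e+1}s(2^e-n) + (-1)^e s(2^e-n) - (-1)^e s(n) = (-1)^{e+1} s(n).
\]
Next observe that (iii) follows from (ii) applied at the next level, together with the Stern identity $s(2^k+m) = s(2^k-m) + s(m)$ for $0 \leq m \leq 2^k$ (Theorem 1.2(i) of \cite{B}, which the paper has already invoked). Indeed, (ii) at level $e+1$ yields $t(2^{e+1}+n) = (-1)^{e+1}(s(2^{e+1}-n) - s(n))$, and applying the Stern identity with $k = e$ and $m = 2^e - n$ gives $s(2^{e+1}-n) = s(n) + s(2^e - n)$, which collapses to (iii). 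Hence the real task is to prove (ii).

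For (ii), I would proceed by induction on $e$, splitting $n$ according to parity. The base case $e=0$ is immediate: $t(2) = -1 = s(0) - s(1)$. For the inductive step, assume (ii) at level $e-1$ for $1 \leq n \leq 2^{e-1}$ and prove it at level $e$ for $1 \leq n \leq 2^e$. When $n = 2m$, the twisted recurrence $t(2^e + 2m) = -t(2^{e-1} + m)$ combined with (ii) at level $e-1$ and $s(2k) = s(k)$ delivers the claim. When $n = 2m+1$ with $m \geq 1$, the recurrence $t(2^e + 2m+1) = -t(2^{e-1}+m) - t(2^{e-1}+m+1)$ exposes two instances of (ii) at the previous level, and $s(2k+1) = s(k) + s(k+1)$ reassembles the pieces into $(-1)^e(s(2^e - n) - s(n))$.

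The one delicate point, and the likely main obstacle, is the boundary case $n = 1$ (i.e., $m = 0$) in the odd branch, since $t(2^{e-1} + 0) = t(2^{e-1})$ falls outside the stated range $1 \leq n \leq 2^{e-1}$ of the inductive hypothesis. To close this, one needs the auxiliary value $t(2^{e-1}) = (-1)^{e-1}$, which follows by iterating $t(2k) = -t(k)$ from $t(1) = 1$, together with the small identity $s(2^e - 1) = s(2^{e-1} - 1) + 1$ coming from $s(2k+1) = s(k) + s(k+1)$ applied to $n = 2^{e-1}-1$. Once this boundary is resolved, the induction closes, and (iii) and (i) follow by the reductions above.
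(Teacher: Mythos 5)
Your proposal is correct, but it takes a genuinely different route from the paper. The paper does almost no work here: it simply cites Bacher's Proposition 3.1 for (i) and Theorem 1.2 for (ii), and then derives (iii) by the one-line algebraic combination $t(2^{e+1}+n)=(-1)^{e+1}s(n)-t(2^e+n)=(-1)^{e+1}s(2^e-n)$. You instead make the lemma self-contained: you prove (ii) from scratch by induction on $e$ with a parity split on $n$, obtain (iii) from (ii) at level $e+1$ together with the reflection identity $s(2^e+m)=s(2^e-m)+s(m)$ (which the paper does invoke elsewhere, in the proof of Lemma \ref{pss}), and then recover (i) by adding (ii) and (iii). I checked the details: the base case, both parity branches, and in particular the boundary case $n=1$ in the odd branch --- where you correctly supply $t(2^{e-1})=(-1)^{e-1}$ and $s(2^e-1)=s(2^{e-1}-1)+1$ to compensate for $t(2^{e-1}+0)$ falling outside the inductive hypothesis --- all close properly, and there is no circularity in deriving (i) and (iii) from the independently established (ii). What your approach buys is independence from the external reference at the cost of a longer argument; what the paper's approach buys is brevity, since parts (i) and (ii) are treated as known inputs and only (iii) is actually proved.
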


\begin{proof} Parts (i) and (ii) are given in Proposition 3.1 and Theorem 1.2 of \cite{B}, respectively. Part (iii) follows easily from (i) and (ii). 

Note that (i) gives that $$t(2^{e+1}+n)=(-1)^{e+1}s(n)-t(2^{e}+n),$$ which by (ii) becomes \begin{equation*}t(2^{e+1}+n)=(-1)^{e+1}s(n)+(-1)^{e+1}s(2^e-n)-(-1)^{e+1}s(n)=(-1)^{e+1}s(2^e-n).\qedhere\end{equation*} 
\end{proof}

\begin{proof}[Proof of Theorem \ref{Bacherconj2} (ii)] We denote as before the generating series of the Stern sequence by $S(z)$. Splitting up the sum in the definition of $H(z)$ we see that $$H(z)=\frac{1}{z S(z)}-\frac{1+z}{z^{2}}\cdot\frac{T(z)}{S(z)}.$$ Since we will need to consider $H(z^{2^e})$, we will need to compute $\frac{T(z^{2^e})}{S(z^{2^e})}.$ Fortunately we have done this in the proof of Theorem \ref{Bacherconj1}, in \eqref{ToverS}, and so we use this expression here. Thus, applying the functional equation for $S(z)$, we have that \begin{multline*} H(z^{2^e})=\frac{\prod_{i=0}^{e-1}\left(1+z^{2^i}+z^{2^{i+1}}\right)}{z^{2^{e+1}-1}S(z)}-(-1)^e\left(\frac{1+z^{2^e}}{z^{2^{e+1}}}\right)\frac{T(z)}{S(z)}\\ +(-1)^e\left(\frac{1+z^{2^e}}{z^{2^{e+1}}}\right)\frac{2z}{S(z)}\sum_{j=0}^{e-1}(-1)^j\prod_{i=0}^{j-1}\left(1+z^{2^i}+z^{2^{i+1}}\right),\end{multline*} so that \begin{multline*} (-1)^{e+1}H(z^{2^e})S(z)=\frac{1}{z^{2^{e+1}}}\left((1+z^{2^e})T(z)-(-1)^ez\prod_{i=0}^{e-1}\left(1+z^{2^i}+z^{2^{i+1}}\right)\right.\\ \left.-2z(1+z^{2^e})\sum_{j=0}^{e-1}(-1)^j\prod_{i=0}^{j-1}\left(1+z^{2^i}+z^{2^{i+1}}\right)\right).\end{multline*} An application of Lemma \ref{tspp} gives \begin{align*} (-1)^{e+1}H(z^{2^e})S(z)&=\frac{1}{z^{2^{e+1}}}\left((1+z^{2^e})T(z)-(-1)^ez\prod_{i=0}^{e-1}\left(1+z^{2^i}+z^{2^{i+1}}\right)\right.\\ 
&\ \ \left.-(1+z^{2^e})\sum_{n=0}^{3\cdot 2^e}t(n)z^n-(-1)^ez(z^{2^e}-1)(1+z^{2^e})\prod_{i=0}^{e-1}\left(1+z^{2^i}+z^{2^{i+1}}\right)\right)\\
&=\frac{1}{z^{2^{e+1}}}\left(\mathfrak{S}_1+\mathfrak{S}_2+\mathfrak{S}_3+\mathfrak{S}_3\right),\end{align*} where we have used the $\mathfrak{S}_i$ to indicate the terms in the previous line. 

Note that \begin{align*}\mathfrak{S}_1+\mathfrak{S}_3&=(1+z^{2^e})\left(T(z)-\sum_{n=0}^{3\cdot 2^e}t(n)z^n\right)\\
&=(1+z^{2^e})\sum_{n\geq 1}t(3\cdot 2^e+n)z^{n+3\cdot2^e}\\
&=\sum_{n\geq 2^e+1}t(2^{e+1}+n)z^{n+2^{e+1}}+\sum_{n\geq 2^{e+1}+1}t(2^{e}+n)z^{n+2^{e+1}},\end{align*} so that $$\frac{\mathfrak{S}_1+\mathfrak{S}_3}{z^{2^e+1}}=\sum_{n\geq 2^e+1}t(2^{e+1}+n)z^{n}+\sum_{n\geq 2^{e+1}+1}t(2^{e}+n)z^{n}.$$ Using Lemma \ref{pss}, we have \begin{align*} \frac{\mathfrak{S}_2+\mathfrak{S}_4}{z^{2^{e+1}}}&=\frac{1}{z^{2^{e+1}}}\left(-(-1)^e-(-1)^e(z^{2^{e+1}}-1)\right)z\prod_{i=0}^{e-1}\left(1+z^{2^i}+z^{2^{i+1}}\right)\\
&=\frac{(-1)^{e+1}}{z^{2^{e+1}}}\cdot z^{2^{e+1}}\left(\sum_{n=1}^{2^e}s(n)z^n+\sum_{n=1}^{2^e-1}s(2^e-n)z^{n+2^e}\right)\\
&=(-1)^{e+1}\left(\sum_{n=1}^{2^e}s(n)z^n+\sum_{n=1}^{2^e-1}s(2^e-n)z^{n+2^e}\right).
\end{align*} Using the proceeding lemma and the fact that $t(2^{e+1})+t(2^e)=0$ so that we can add in a zero term, we have that \begin{align*} \frac{\mathfrak{S}_2+\mathfrak{S}_4}{z^{2^{e+1}}}&=\sum_{n=1}^{2^e}(t(2^{e+1}+n)+t(2^e+n))z^n+\sum_{n=1}^{2^e-1}t(2^{e+1}+n)z^{n+2^e}\\
&=\sum_{n=1}^{2^e}(t(2^{e+1}+n)+t(2^e+n))z^n+\sum_{n=2^e}^{2^{e+1}-1}t(2^{e}+n)z^{n}\\
&=\sum_{n=0}^{2^e}(t(2^{e+1}+n)+t(2^e+n))z^n+\sum_{n=2^e}^{2^{e+1}-1}t(2^{e}+n)z^{n}\\
&=\sum_{n=0}^{2^e}t(2^{e+1}+n)z^n+\sum_{n=0}^{2^{e+1}-1}t(2^{e}+n)z^{n}.
\end{align*} Putting together these results gives \begin{align*}(-1)^{e+1}H(z^{2^e})S(z)&=\frac{1}{z^{2^{e+1}}}\left(\mathfrak{S}_1+\mathfrak{S}_2+\mathfrak{S}_3+\mathfrak{S}_3\right)\\ &=\sum_{n\geq 0}t(2^{e+1}+n)z^n+\sum_{n\geq 0}t(2^{e}+n)z^{n},\end{align*} which proves the theorem.
\end{proof}

\section{Computing with binary expansions}

To gain intuition regarding Bacher's conjectures mentioned in the first section, we found it very useful to understand what happens to the Stern sequence and its twist at sums of powers of $2$. Thus, in this section we prove the following theorem which removes the need to use the recurrences to give the values of the Stern sequence. 

\begin{theorem} Let $n\geqslant 4$ and write $n=\sum_{i=0}^m 2^ib_i$, the binary expansion of $n$. Then $$s(n)=\left[\begin{matrix}1 & 1\end{matrix}\right]\left(\prod_{i=1}^{m-1}\left[\begin{matrix}1 & 1-b_i\\ b_i & 1 \end{matrix}\right]\right)\left[\begin{matrix}1\\ b_0 \end{matrix}\right]$$
\end{theorem}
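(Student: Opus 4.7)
The plan is to recast the defining Stern recurrence as a $2\times 2$ matrix identity and then iterate it. Set
$$M(b) := \begin{pmatrix} 1 & 1-b \\ b & 1 \end{pmatrix},$$
so that $M(0)=\begin{pmatrix}1&1\\0&1\end{pmatrix}$ and $M(1)=\begin{pmatrix}1&0\\1&1\end{pmatrix}$. First I would observe, directly from $s(2n)=s(n)$ and $s(2n+1)=s(n)+s(n+1)$ (and consequently $s(2n+2)=s(n+1)$), that the row vector $\vec r(n):=[\,s(n),\,s(n+1)\,]$ satisfies the one-step rule
$$\vec r(2n+b) = \vec r(n)\,M(b) \qquad (b\in\{0,1\}).$$

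The main step is then to unroll this recurrence. For $n\geqslant 4$ written as $n=\sum_{i=0}^m 2^i b_i$ with $b_m=1$ (so $m\geqslant 2$), a short induction on $m$ starting from $\vec r(1)=[1,1]$ yields
$$\vec r(n) = [\,1,\,1\,]\,M(b_{m-1})\,M(b_{m-2})\cdots M(b_1)\,M(b_0).$$
Since $s(n)$ is the first coordinate of $\vec r(n)$, one has $s(n)=\vec r(n)\,\binom{1}{0}$, and a direct computation gives $M(b_0)\binom{1}{0}=\binom{1}{b_0}$, which folds the final matrix into the column vector on the right. This produces exactly
$$s(n) = [\,1,\,1\,]\,M(b_{m-1})\cdots M(b_1)\,\binom{1}{b_0},$$
which is the claimed identity, with the product read in the order dictated by the iteration (descending in the index).

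There is no real obstacle here; once the matrix form of the recurrence is in hand, the argument is essentially mechanical. The only subtlety is one of convention: the product $\prod_{i=1}^{m-1}$ in the theorem statement should be interpreted so that the leftmost factor is $M(b_{m-1})$. If one prefers the opposite (ascending) order, the resulting scalar remains the same, since the palindromic symmetry of Stern's sequence $s(n)=s(\tilde n)$, where $\tilde n$ denotes the integer with the binary digits of $n$ reversed, shows that swapping the order of the middle factors does not affect the value of $[\,1,\,1\,](\cdots)\binom{1}{b_0}$. Thus the formula is unambiguous under either reading.
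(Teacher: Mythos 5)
Your main argument is correct and is essentially the paper's proof in different clothing: the paper writes the same two-term recurrence for the pair $\bigl(s(n_k),s(n_k+1)\bigr)$ in scalar form and iterates it from $k=0$ up to $k=m$, which is exactly your row-vector identity $\vec r(2n+b)=\vec r(n)M(b)$ unrolled from $\vec r(1)=[1,1]$. So the substance of the proof is fine, and you correctly identify that the product must be read with $M(b_{m-1})$ as the leftmost factor.

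However, your closing remark --- that the ascending reading gives the same scalar because of the palindromic symmetry of Stern's sequence --- is false. Take $n=10=(1010)_2$, so $m=3$, $b_0=0$, $b_1=1$, $b_2=0$. The descending product gives
$$[\,1\ \ 1\,]\,M(0)M(1)\begin{pmatrix}1\\0\end{pmatrix}=[\,1\ \ 1\,]\begin{pmatrix}2&1\\1&1\end{pmatrix}\begin{pmatrix}1\\0\end{pmatrix}=3=s(10),$$
while the ascending product gives
$$[\,1\ \ 1\,]\,M(1)M(0)\begin{pmatrix}1\\0\end{pmatrix}=[\,1\ \ 1\,]\begin{pmatrix}1&1\\1&2\end{pmatrix}\begin{pmatrix}1\\0\end{pmatrix}=2\neq 3.$$
The palindrome property $s(n)=s(\tilde n)$ reverses \emph{all} of the binary digits, including the leading $b_m=1$ and the trailing $b_0$; reversing only the middle factors $M(b_1),\dots,M(b_{m-1})$ computes $s$ of the number with digit string $1\,b_1\cdots b_{m-1}\,b_0$, which is a different integer in general (here $(1100)_2=12$, and indeed $s(12)=2$). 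So the formula is \emph{not} unambiguous: only the descending reading, the one your iteration actually produces, is valid, and you should delete the claim to the contrary.
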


\begin{proof} Note that from the definition of the Stern sequence we easily have that $$s(2a+x)=s(a)+x\cdot s(a+1)\qquad (x\in\{0,1\}).$$ It follows that for each $k\in\{0,1,\ldots,m-1\}$ we have that both $$s\left(\sum_{i=k}^m 2^{i-k}b_i\right)=s\left(\sum_{i=k+1}^m 2^{i-(k+1)}b_i\right)+b_k\cdot s\left(1+\sum_{i=k+1}^m 2^{i-(k+1)}b_i\right)$$ and $$s\left(1+\sum_{i=k}^m 2^{i-k}b_i\right)=(1-b_k)\cdot s\left(\sum_{i=k+1}^m 2^{i-(k+1)}b_i\right)+s\left(1+\sum_{i=k+1}^m 2^{i-(k+1)}b_i\right).$$ Starting with $k=0$ and applying the above equalities, and using the fact that $b_m=1$ so that $s(b_m)=s(b_m+1)=1$ gives the result.
\end{proof}

We have a similar result for the twisted Stern sequence whose proof is only trivially different from the above and so we have omitted it.

\begin{theorem} Let $n\geqslant 4$ and write $n=\sum_{i=0}^m 2^ib_i$, the binary expansion of $n$. Then $$t(n)=(-1)^m\left[\begin{matrix}1 & -1\end{matrix}\right]\left(\prod_{i=1}^{m-1}\left[\begin{matrix}1 & 1-b_i\\ b_i & 1 \end{matrix}\right]\right)\left[\begin{matrix}1\\ b_0 \end{matrix}\right].$$
\end{theorem}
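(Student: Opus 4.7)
The plan is to follow the proof of the preceding theorem almost line by line, tracking the two modifications that arise from the twisted recurrence: an extra factor of $-1$ at each bit peeling, and a change of base value at the leading bit.

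First, I would record the $t$-analogues of the two identities used in the Stern case. From $t(2n) = -t(n)$ and $t(2n+1) = -t(n) - t(n+1)$ we have
$$t(2a + x) = -t(a) - x\cdot t(a+1) \qquad (x \in \{0,1\}),$$
and checking the two cases $x = 0$ and $x = 1$ separately gives
$$t(1 + 2a + x) = -(1-x)\cdot t(a) - t(a+1).$$
These are exactly the identities used in the preceding proof, with an overall factor of $-1$ on each right--hand side. Substituting as in that proof, for each $k \in \{0, 1, \ldots, m-1\}$ one obtains
$$t\!\left(\sum_{i=k}^{m} 2^{i-k} b_i\right) = -\,t\!\left(\sum_{i=k+1}^{m} 2^{i-(k+1)} b_i\right) - b_k \cdot t\!\left(1 + \sum_{i=k+1}^{m} 2^{i-(k+1)} b_i\right),$$
together with the parallel identity for $t\!\left(1 + \sum_{i=k}^{m} 2^{i-k} b_i\right)$.

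Iterating these equalities from $k = 0$ up to $k = m-1$ applies the same $2 \times 2$ matrix factors as in the Stern proof, since the coefficients on $t(\cdot)$ and $t(\cdot+1)$ inside the brackets agree with those on $s(\cdot)$ and $s(\cdot+1)$. However, each of the $m$ iterations now contributes an extra factor of $-1$, and these combine to the overall $(-1)^m$ in the statement. The iteration terminates at the leading bit $b_m = 1$, where $t(b_m) = t(1) = 1$ and $t(b_m + 1) = t(2) = -t(1) = -1$; this produces the row vector $[1,\,-1]$ in place of the $[1,\,1]$ appearing in the Stern theorem.

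Since the matrix algebra is mechanically identical to the preceding case, I do not anticipate any substantive obstacle. The only real work is the sign bookkeeping and correctly reading off the new base case $t(2) = -1$, which is precisely why the author describes this as a "trivially different" variation on the proof of the $s$-identity.
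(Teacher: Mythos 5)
Your proposal is correct and is exactly the argument the paper intends: it mirrors the proof of the preceding Stern-sequence theorem, and your two identities $t(2a+x)=-t(a)-x\cdot t(a+1)$ and $t(1+2a+x)=-(1-x)\cdot t(a)-t(a+1)$ check out in both cases $x=0,1$, with the $m$ sign flips giving $(-1)^m$ and the base values $t(1)=1$, $t(2)=-1$ giving the row vector $[1,\,-1]$. The paper omits this proof precisely because it is, as you say, only trivially different from the untwisted case.
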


Indeed, both $s(n)$ and $t(n)$ are $2$--regular, and so the fact that $s(n)$ and $t(n)$ satisfy theorems like the two above is provided by Lemma 4.1 of \cite{AS} (note that while the existence is proven, the matrices are not explicitly given there).


\section{Acknowledgement} 

The author would like to thank Cameron L.~Stewart for a very enlightening conversation. 


\end{document}